\newtheorem{thm}{Theorem}
\newtheorem{prop}{Proposition}
\newtheorem{lem}[prop]{Lemma}
\theoremstyle{definition}
\theoremstyle{remark}
\newtheorem{rmk}[prop]{Remark} 
\newcommand{\F}{{\mathbb{F}}}
\newcommand{\R}{{\mathbb{R}}}
\newcommand{\Z}{{\mathbb{Z}}}
\newcommand{\Q}{{\mathbb{Q}}}
\newcommand{\M}{\mathcal{M}}
\newcommand{\ham}{\mathcal{H}am}
\newcommand{\mv}{\mu_{MV}}
\newcommand{\fw}{\varphi^t_{\varepsilon {H_f}}}
\newcommand{\fc}{\varepsilon {H_f}}
\newcommand{\cls}{c_{LS}}
\newcommand{\overbar}{\overline}
\newcommand{\al}{\alpha}
\newcommand{\ga}{\gamma}
\newcommand{\cA}{\mathcal{A}}
\newcommand{\cD}{\mathcal{D}}
\newcommand{\cH}{\mathcal{H}}
\newcommand{\cJ}{\mathcal{J}}
\newcommand{\cP}{\mathcal{P}}
\newcommand{\cT}{\mathcal{T}}
\newcommand{\cM}{\mathcal{M}}
\newcommand{\brat}[1]{{\left< #1 \right>}}
\newcommand{\tens}{\otimes}
\newcommand{\ah}{\mathcal{A}_{H}}
\DeclareMathOperator{\im}{\mathrm{Im}}
\DeclareMathOperator{\spec}{\mathrm{Spec}}
\DeclareMathOperator{\crit}{{\mathrm{Crit}}}
\DeclareMathOperator{\ind}{{\mathrm{ind}}}
\DeclareMathOperator{\virdim}{\mathrm{vir-dim}}
\numberwithin{equation}{section}
\title[Cuplength estimates]{A short proof of cuplength estimates on Lagrangian intersections}
\author{Wenmin Gong}
\address{School of Mathematical Sciences, Beijing Normal University, Beijing, 100875, China}
\email{ wmgong@bnu.edu.cn}
\begin{document}
\maketitle

\begin{abstract}
	In this note we give a short proof of  Arnold's conjecture for the zero section of a cotangent bundle of a closed manifold. The proof is based on some basic properties of Lagrangian spectral invariants from Floer theory.

\end{abstract}

\maketitle

\section{Introduction}\label{sec:1}

Let $M$ be a $n$-dimensional closed manifold. We denote by $\omega$ the canonical symplectic structure on the cotangent bundle $T^*M$, which is given by $\omega=-d\theta$ with the Liouville one-form $\theta=pdq$. Given $H\in C^\infty([0,1]\times T^*M)$, the Hamiltonian vector field $X_H$ is determined by $dH=-\omega(X_H,\cdot)$. The flow of $X_H$ is denoted by $\varphi_H^t$ and its time-one map by $\varphi_H:=\varphi_H^1$. Denote by $\ham_c(M,\omega)$  the set of all Hamiltonian diffeomorphisms with compact support. Clearly, for any asymptotically constant Hamiltonian $H$ we have $\varphi_H\in \ham_c(M,\omega)$. Fix a ground field $\F$, eg $\Z_2$,  $\R$, or $\Q$. The singular homology of a topological space $X$ with coefficients in $\F$ is denoted by $H_*(X)$. The \textit{$\F$-cuplength} $cl(M)$ of $M$ is by definition the maximal integer $k$ such that there exist homology classes $u_1,\ldots, u_{k-1}$ in the homology $H_*(M)$ as a ring (with the intersection product)  with $\deg (u_i)<\dim (M) $ such that
$$u_1\cap\cdots\cap u_{k-1}\neq0,$$
where $\cap$ denotes the intersection product.

The goal of this note is to use spectral invariants from Floer theory to reprove the cuplength estimate:

\begin{thm}\label{e:mthm}
	Let $O_M$ denote the zero section of $T^*M$. Then we have
	$$\sharp\big(\varphi(O_M)\cap O_M\big)\geq cl(M),\quad\forall\varphi\in \ham_c(M,\omega).$$
\end{thm}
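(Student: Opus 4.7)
The strategy is to exploit the formalism of Lagrangian spectral invariants associated to the zero section $O_M \subset T^*M$. For each nonzero $u \in H_*(M)$ and each compactly supported Hamiltonian $H$ on $T^*M$ one has a real number $\ell(u;H)$, constructed via filtered Lagrangian Floer theory of the pair $(O_M,O_M)$ with respect to the action functional
$$
\cA_H(\gamma) = \int_0^1 H(t,\gamma(t))\,dt - \int \gamma^*\theta
$$
on paths $\gamma:[0,1]\to T^*M$ with endpoints on $O_M$. The three properties I plan to use are: \emph{spectrality}, namely $\ell(u;H) = \cA_H(\gamma)$ for some Hamiltonian chord $\gamma(t) = \varphi_H^t(x)$ with $x,\varphi_H(x) \in O_M$, so that $\ell(u;H)$ is realized as the action at an intersection point of $\varphi_H(O_M) \cap O_M$; \emph{Hofer continuity} in $H$; and the \emph{triangle inequality} $\ell(u\cap v; H\# K) \leq \ell(u;H) + \ell(v;K)$ whenever $u \cap v \neq 0$.

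Given classes $u_1,\ldots,u_{k-1}$ with $k=cl(M)$, $\deg u_i<\dim M$, and $u_1\cap\cdots\cap u_{k-1}\neq 0$, set $v_0:=[M]$, $v_i:=u_1\cap\cdots\cap u_i$, and $c_i:=\ell(v_i;H)$ for $0\leq i\leq k-1$. By spectrality each $c_i$ is the action at a point of $\varphi_H(O_M)\cap O_M$. Applying the triangle inequality with a $C^\infty$-small, compactly supported Morse function $\epsilon f$ on $T^*M$ and passing to the limit $\epsilon\to 0$ via Hofer continuity yields the chain of inequalities
$$
c_0 \;\geq\; c_1 \;\geq\; \cdots \;\geq\; c_{k-1}.
$$
If these $k$ numbers are pairwise distinct, the theorem is immediate, since they label $k$ distinct action values and therefore $k$ distinct intersection points.

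The main obstacle is the degenerate case, in which some consecutive $c_{i-1}=c_i=:c$ coincide. I plan to handle this by a Lusternik-Schnirelmann-style argument carried out purely in terms of the three spectral properties above: suitable small Morse perturbations together with the triangle inequality against the positive-degree class $u_i$ should force the existence of a second geometrically distinct intersection point at the common action level $c$, lest a single chord at level $c$ support two spectral invariants coming from genuinely different homology classes. Grouping coincidences, every maximal constant run of length $r+1$ in the sequence $\{c_i\}$ then contributes at least $r+1$ intersection points at its common action level, so summing over such runs gives $\sharp(\varphi_H(O_M)\cap O_M) \geq k = cl(M)$. This spectral Lusternik-Schnirelmann step is the technical heart of the argument and the piece I expect to be the most delicate; once it is in place, the theorem follows for arbitrary $\varphi \in \ham_c(M,\omega)$ by selecting any compactly supported $H$ with $\varphi_H=\varphi$ and applying the above discussion.
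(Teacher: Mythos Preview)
Your overall framework --- spectrality, Hofer continuity, and the triangle inequality for $\ell$ --- matches the paper's, and the chain $c_0\ge c_1\ge\cdots\ge c_{k-1}$ is set up correctly. The genuine gap is exactly the step you yourself flag as ``the technical heart'': your treatment of the degenerate case $c_{i-1}=c_i$ is a hope, not an argument. Asserting that a small Morse perturbation together with the triangle inequality ``should force the existence of a second geometrically distinct intersection point at the common action level'' does not constitute a proof; in Lagrangian Floer theory there is no off-the-shelf version of the classical Lusternik--Schnirelmann statement that coincidence of two minmax values forces the critical set at that level to carry the class $u_i$, and nothing in the three properties you list produces such a statement. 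In particular, with a \emph{generic} small Morse function $f$ the perturbed intersection set and the action spectrum both change, so one cannot simply compare before and after.

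The paper avoids the degenerate case altogether by proving the \emph{strict} inequality $\ell(\alpha\cap\beta,H)<\ell(\beta,H)$ directly whenever $\deg\alpha<n$ and the intersections $\varphi_H(O_M)\cap O_M$ are isolated (which one may assume without loss of generality). The idea you are missing is the specific choice of auxiliary function: take $f:M\to\R$ with $f\equiv 0$ on the closure $\overline{U}$ of a small neighborhood of the finitely many intersection points and $f<0$ on $M\setminus\overline{U}$. Because $f$ vanishes near the intersections, for small $\varepsilon>0$ the perturbed Lagrangian $\varphi_{\varepsilon H_f}\varphi_H(O_M)$ has the \emph{same} intersection set with $O_M$ and the \emph{same} action values, whence $\ell(\alpha\cap\beta,H)=\ell(\alpha\cap\beta,\varepsilon H_f\sharp H)$ by continuity and constancy of the spectrum. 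On the other hand, classical Lusternik--Schnirelmann theory applied to $f$ on $M$ (the level set $\overline{U}$ has trivial positive-degree homology) gives $c_{LS}(\alpha,f)<0$ for every $\alpha$ of degree $<n$, hence $\ell(\alpha,\varepsilon H_f)<0$. The triangle inequality then yields
\[
\ell(\alpha\cap\beta,H)=\ell(\alpha\cap\beta,\varepsilon H_f\sharp H)\le \ell(\alpha,\varepsilon H_f)+\ell(\beta,H)<\ell(\beta,H).
\]
Thus the $c_i$ are automatically pairwise distinct, and the count of $k=cl(M)$ intersection points follows with no degenerate case to handle.
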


The above estimate is a special case of the Arnold conjectures~\cite{Ar}.
This case has already been solved by different approaches, for instance,  by Chaperon~\cite{Ch} for the cotangent bundle of torus using variational methods given by Conley and Zehnder~\cite{CZ}, and for general cotangent bundles by Hofer~\cite{Ho1} applying Lyusternik-Shnirelman category theory,  and Laudenbach and Sikorav~\cite{LS} employing a finite dimensional method of ``broken extremals", etc.
All available proofs listed above were given by finite dimensional homological methods essentially.
The method of our proof of Theorem~\ref{e:mthm} is making good use of the properties of Lagrangian spectral invariants~\cite{Oh1,Oh2} from Floer theory which, roughly speaking, is a version of infinite dimensional Morse theory.

\begin{rmk}
	By a modification of the method used here, one can prove a slightly general case of Arnold's conjecures: If $L$ is a closed Lagrangian submanifold of a smooth tame symplectic manifold $(P,\omega)$ satisfying $\pi_2(M,L)=0$, then for any $\varphi\in\ham_c(P,\omega)$, $L\cap\varphi(L)$ has at least $cl(L)$ points. This case was independently proved by Floer~\cite{Fl2} and Hofer~\cite{Ho2}. It seems to the author that the spectral method from Floer theory fits in the degenerate Arnold conjecture very well. For the case that $L$ is a closed monotone Lagrangian submanifold of  of a smooth tame symplectic manifold $P$, we refer to~\cite{Go} for partial results about the Arnold conjecture on Lagrangian intersections in degenerate (non-transversal) sense. Maybe other methods (eg Hofer's) could also be modified to deal with this case, but we are not aware of any reference about it at the time of writing.
	As far as the author knows the degenerate Arnold's conjecture in the monotone case is not understood quite well, and even for a weaker problem, ie  estimating the number of intersections of a closed Lagrangian submanifold with itself under Hamiltonian flows without the nondegenerate assumption. We believe that the spectral method from Floer theory would provide further potential value for attacking these kinds of problems.
	
\end{rmk}

\section{Spectral invariants}\label{sec:2}

\subsection{The minmax critical values}

Let $X$ be a closed $n$-dimensional manifold $X$ and let
$f\in C^\infty(X)$. For any $\mu\in\R$ we put $$X^\mu:=\{x\in X|f(x)<\mu\}.$$
To a non-zero singular homology class $\al\in H_*(X)$, we associate a  numerical invariant by
$$c_{LS}(\al,f)=\inf\{\mu\in\R|\al\in\im(i^\mu_*)\},$$
where $i^\mu_*:H_*(X^\mu)\to H_*(X)$ is the map induced by the natural inclusion $i^\mu:X^\mu\to X$. This number is a critical value of $f$. The function $c_{LS}:H_*(X)\setminus\{0\}\times C^\infty(X)$ is often called a \textit{minmax critical value selector}. The following proposition summarizing the properties of the resulting function, which can be easily extracted from the classical Ljusternik--Schnirelman theory, see, e.g., ~\cite{Cha, HZ,Vi2,CLOT, GG}.

\begin{prop}\label{pp:minmax}
	The minmax critical value selector $c_{LS}$ satisfies the following properties.
	\begin{enumerate}
		\item[{\rm (1)}] $c_{LS}(\al,f)$ is a critical value of $f$, and $c_{LS}(k\al,f)=c_{LS}(\al,f)$ for any nonzero $k\in\F$.
		\item[{\rm(2)}] $c_{LS}(\al,f)$ is Lipschitz in $f$ with respect to the $C^0$-topology.
		\item[{\rm(3)}] Let $[pt]$ and $[X]$ denote the class of a point and the fundamental class respectively. Then
		$$c_{LS}([pt],f)=\min f\leq c_{LS}(\al,f)\leq\max f= c_{LS}([X],f).$$
		\item[{\rm (4)}] $c_{LS}(\al\cap \beta,f)\leq c_{LS}(\al,f)$ for any $\beta\in H_*(X)$ with $\al\cap \beta\neq 0$.
		\item[{\rm(5)}] If $\beta\neq k[X]$ for some $k\in\F$ and $c_{LS}(\al\cap \beta,f)= c_{LS}(\al,f)$, then the set $\Sigma=\{x\in\crit(f)|f(x)= c_{LS}(\al,f)\}$ is homologically non-trivial.
	\end{enumerate}
\end{prop}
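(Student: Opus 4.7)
\begin{pfs}
The plan is to verify (1)--(5) using classical Ljusternik--Schnirelman tools: the non-critical neck theorem (if $f$ has no critical values in $[a,b]$ then $X^b$ deformation retracts onto $X^a$), the naturality of the cap product under the open inclusion $i^\mu : X^\mu \hookrightarrow X$, and excision near the critical level set $\Sigma$.

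For (1), if $c=c_{LS}(\al,f)$ were regular, the non-critical neck theorem would give $\im(i^{c+\eps}_*)=\im(i^{c-\eps}_*)$, contradicting the infimum definition of $c$; and multiplication by $k\ne 0$ in $\F$ is a homology isomorphism, so $c_{LS}(k\al,f)=c_{LS}(\al,f)$. Property (2) follows from the monotonicity $X_g^\mu\subseteq X_f^{\mu+\|f-g\|_{C^0}}$, which gives the one-Lipschitz bound. For (3), $[pt]\in\im(i^\mu_*)$ as soon as $X^\mu\ne\emptyset$ and $[X]\in\im(i^\mu_*)$ only when $X^\mu=X$, yielding the outer equalities; the middle inequalities then follow from (4) applied with $\beta=[X]$ (for which $\al\cap[X]=\al$) and with $\beta=[pt]$ (for which $\al\cap[pt]$ is a nonzero multiple of $[pt]$ when $\al\ne 0$).

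For (4), the approach is naturality of the cap product: writing $\al=i^\mu_*(\til{\al})$ with $\til{\al}\in H_*(X^\mu)$ and letting $b\in H^*(X)$ be Poincar\'e dual to $\beta$, one has $\al\cap\beta=\al\frown b=i^\mu_*\bigl(\til{\al}\frown(i^\mu)^*b\bigr)\in\im(i^\mu_*)$, so taking the infimum over admissible $\mu$ gives $c_{LS}(\al\cap\beta,f)\le c_{LS}(\al,f)$.

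The main obstacle will be (5), the multiplicity statement. My strategy is to argue by contradiction. Assume $\beta\ne[X]$, $c_{LS}(\al\cap\beta,f)=c_{LS}(\al,f)=c$, and that $\Sigma$ is homologically trivial in a suitable local sense, namely that it admits an arbitrarily small neighborhood $V$ in $X^{c+\eps}$ with $H_{>0}(\Sigma)\to H_{>0}(V)$ the zero map. Combining the gradient flow of $f$ away from $\Sigma$ with a chain-level surgery on $V$, using that the positive-degree class $b$ Poincar\'e-dual to $\beta\ne[X]$ restricts trivially to a sufficiently small neighborhood of the homologically trivial $\Sigma$, I would push a representative of $\al\cap\beta=\al\frown b$ down into $X^{c-\eps}$, forcing $c_{LS}(\al\cap\beta,f)<c$, a contradiction. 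The delicate point is formulating ``homologically non-trivial'' intrinsically (via \v{C}ech or Alexander--Spanier cohomology) so that $\Sigma$ need not be a submanifold, and making the surgery work at the chain level; this is the content of the classical LS multiplicity lemma, which I would quote from the cited references rather than re-derive.
\end{pfs}
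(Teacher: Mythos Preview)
The paper does not actually prove this proposition: it states that the properties ``can be easily extracted from the classical Ljusternik--Schnirelman theory'' and defers to the references \cite{Cha,HZ,Vi2,CLOT,GG}. Your sketch therefore goes well beyond what the paper provides, and the route you outline (deformation lemma for (1), sublevel-set inclusions for (2), naturality of the cap product for (4), and the LS multiplicity lemma for (5)) is exactly the classical argument the paper is alluding to. In particular, your plan to quote the references for the delicate part of (5) is precisely what the paper does for the entire proposition.

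One small slip: in your derivation of (3), the claim that $\al\cap[pt]$ is a nonzero multiple of $[pt]$ for every $\al\ne 0$ is false, since $\al\cap[pt]\in H_{\deg\al-n}(X)=0$ unless $\deg\al=n$. The lower bound $c_{LS}([pt],f)\le c_{LS}(\al,f)$ is obtained either directly (if $\mu<\min f$ then $X^\mu=\emptyset$ and $\im(i^\mu_*)=0$), or, if you want to go through (4), by choosing $\beta\in H_{n-\deg\al}(X)$ with $\al\cap\beta$ a nonzero multiple of $[pt]$; such $\beta$ exists by Poincar\'e duality over the field $\F$. Also note that the paper's definition of ``homologically non-trivial'' is phrased via $H_*(U)\to H_*(X)$ for neighborhoods $U$ of $\Sigma$, which differs slightly from your formulation in (5); you should align your contradiction hypothesis with that definition.
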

Here a subset $S$ of a topological space $X$ is called \textit{homologically non-trivial} in $X$ if for every open neighborhood $U$ of $S$ the map $i_*:H_k(U)\to H_k(X)$ induced by the inclusion $i:U\hookrightarrow X$ is non-trivial.

\subsection{The Lagrangian spectral invariants}
In this subsection we briefly recall the construction of Lagrangian spectral invariants for Hamiltonian diffeomorphism mainly following Oh~\cite{Oh1,Oh2}, see also~\cite{MVZ,Oh3}. Denote $\cH_{ac}$ the set of Hamiltonians $H\in C^\infty([0,1]\times T^*M)$ which are asymptotically constant at infinity.

For $H\in \cH_{ac}$,
the  action functional is defined as
$$\ah(\ga)=\int^1_0H(t,\ga(t))dt-\int\ga^*\theta $$
on the space of paths in $T^*M$
$$\cP=\big\{\ga:[0,1]\to T^*M\big|\ga(0),\ga(1)\in O_M\big\}.$$
Set $L=\varphi_H^1(O_M)$. We define the \textit{Lagrangian action spectrum} of $H$ on $T^*M$ by
$$\spec(L,H)=\big\{\ah(\ga)\big|\ga\in\crit(\ah)\big\}.$$
This is a compact subset of $\R$ of measure zero, see for instance~\cite{Oh1}.

Given a generic $H\in\cH_{ac}$, the intersection $\varphi_H^{1}(O_M)\cap O_M$ is transverse and hence $\crit(\ah)$ is finite. There is an integer-valued index, called the \textit{Maslov-Viterbo index}, $\mv:\crit(\ah)\to \Z$ which is normalized so that if $H:T^*M\to\R$ is a lift of a Morse function $f$ then $\mv$ coincides with the Morse index of $f$.

Denote by $CF_k^{<a}(L,H)$, where $a\in(-\infty,\infty]$ is not in $\spec(L,H)$, the vector space of formal sums
$$\sum_{x_i\in\cP}\sigma_ix_i,$$
where $\sigma_i\in\F$, $\mv(x_i)=k$ and $\ah(x_i)<a$. The graded $\F$-vector space $CF_k^{<a}(L,H)$ has the Floer differential counting the anti-gradient trajectories of the action functional in the standard way whenever a time-dependent almost complex structure compatible with $\omega$ is fixed and the regularity requirements are satisfied, see for instance~\cite{Fl1,Oh1}. As a consequence, we have a filtration of the total Lagrangian Floer complex $CF_*(L,H):=CF_*^{(-\infty,\infty)}(L,H)$. Since the resulting homology, the \textit{filtered Lagrangian Floer homology} of $H$, does not depend on $H\in\cH_{ac}$ (due to continuation isomorphisms), one can extend this construction to all  asymptotically constant Hamiltonians. Let $H\in\cH_{ac}$ be an arbitrary Hamiltonian and let $a$ be outside of $\spec(L,H)$. We define
$$HF^{<a}_*(L,H)=HF^{<a}_*(L,\widetilde{H}),$$
where $\widetilde{H}$ is a $C^2$-small perturbation of $H$ so that
$\varphi_{\widetilde{H}}^{1}(O_M)\cap O_M$ is transverse.
It is not hard to see that $HF^{<a}_*(L,\widetilde{H})$ is independent of $\widetilde{H}$ provided that $\widetilde{H}$ is sufficiently close to $H$.

We denote by $i_*^a:HF^{<a}_*(L,H)\to HF_*(L,H)$ the induced inclusion maps. It is well known that for $H_f=\pi^*f$ where $\pi:T^*M\to M$ is the projection map and $f$ is a Morse function on $M$, $HF_*(L,H_f)$ is canonically isomorphic to the singular homology $H_*(M)$, and hence $H_*(M)\cong HF(L,H)$ for all $H\in\cH_{ac}$. Using this identification, for $\al\in H_*(M)$ and $H\in\cH_{ac}$ we define
$$\ell(\al,H)=\inf\big\{a\in\R\setminus\spec(L,H)|\al\in\im (i^a_*)\big\}.$$
By convention, we have $\ell(0,H)=-\infty$.

\begin{prop}\label{pp:lsi}
	The Lagrangian spectral invariant $\ell:H_*(M)\setminus\{0\}\times\cH_{ac}\to\R$ has the following properties:
	\begin{enumerate}
		\item[{\rm (a)}]  $\ell(\al,H)\in\spec(L,H)$, in particular it is a finite number.
		\item[{\rm (b)}]  $\ell$ is Lipschitz in $H$ in the $C^0$-topology.
		\item[{\rm (c)}] $\ell([M],H)=-\ell([pt],\overbar{H})$ with $\overbar{H}(t,H)=-H(-t,H)$.
		\item[{\rm (d)}]  $\ell([pt],H)\leq \ell(\al,H)\leq \ell([M],H)$ for all $\al\in H_*(M)\setminus\{0\}$.
		\item[{\rm (e)}]  $\ell(\al,H)=\ell(\al,K)$, when $\varphi_H=\varphi_K$ in the universal covering of the group of  Hamiltonian diffeomorphisms, and $H,K$ are normalized.
		\item[{\rm (f)}] $\ell(\al\cap\beta,H\sharp K)\leq \ell(\al,H)+\ell(\beta,K)$, where $(H\sharp K)(t,x)=H(t,x)+K(t,(\varphi_H^t)^{-1}(x))$.
		\item[{\rm (g)}]  If $\varphi_H(O_M)=\varphi_K(O_M)$, then there exists $C\in\R$ such that $\ell(\al,H)=\ell(\al,K)+C$ for all $\al\in H_*(M)\setminus\{0\}$.
		\item[{\rm (h)}] Let $f:M\to\R$ be a smooth function, and let $H_f:T^*M\to\R$ denote a compactly supported autonomous Hamiltonian so that $H_f=f\circ\pi$ on a ball bundle $T^*_RM:=\{(q,p)\in T^*M||p|\leq R\}$ containing $L^f:=\{(q,\partial_q f(q))\in T^*M|q\in M\}$, and $H_f=0$ outside $T^*_{R+1}M$ in $M$, where $|\cdot|$ is the norm induced by a metric $\rho$ on $M$, and $\pi:T^*M\to M$ is the natural projection map. Then $\ell(\al,H_f)=c_{LS}(\al,f)$ for all $\al\in H_*(M)\setminus\{0\}$.
	\end{enumerate}
\end{prop}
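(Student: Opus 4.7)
The plan is to derive (a)--(h) by systematically unpacking the construction of filtered Lagrangian Floer homology for $O_M\subset T^*M$, together with its continuation and product structures, following Oh. The three underlying principles are: the identification of $\im(i_*^a)$ as a piecewise-constant function of $a$ with jumps at elements of $\spec(L,H)$; standard energy/continuation estimates for Floer strips with boundary on $O_M$; and Poincaré duality on the filtered complex.

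For (a) I would observe that $a\mapsto\im(i_*^a)$ is locally constant on the complement of $\spec(L,H)$, so the infimum defining $\ell(\al,H)$ is attained at a spectral value; finiteness comes from $HF_*(L,H)\cong H_*(M)\neq 0$, so $\al$ eventually enters the image for $a$ large. Property (b) follows from the Hofer-type continuation estimate: along the linear path $(1-s)H+sK$, Floer continuation strips have energy bounded by $\int_0^1(\max_x(H-K)-\min_x(H-K))\,dt$, which translates to a filtration-shift bound of the same size. Property (e) uses the canonical continuation isomorphism attached to a homotopy in the universal cover of $\ham_c$; the normalization assumption ensures that the associated filtration shift vanishes. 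Property (c) amounts to a filtered Poincaré duality statement: time reversal $H\mapsto\overbar H$ induces a chain isomorphism between $CF^{<a}_*(L,H)$ and the dual of $CF^{<-a+\epsilon}_{n-*}(L,\overbar H)^\vee$, matching the fundamental-class threshold of $H$ with the negative of the point-class threshold of $\overbar H$.

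The triangle inequality (f) is the first serious ingredient. I would construct, following Oh, a chain-level product $CF^{<a}(L,H)\otimes CF^{<b}(L,K)\to CF^{<a+b}(L,H\sharp K)$ by counting Floer polygons on a thrice-punctured disk with Lagrangian boundary on $O_M$ and perturbation data that matches $H,K,H\sharp K$ at the three ends; the filtration bound is built into the energy identity of this moduli problem. Under the identification of (h), this product agrees with the intersection product on $H_*(M)$, giving (f). Property (d) is then a corollary: applying (f) to $\al\cap[M]=\al$ with a small normalized representative of $0$ in place of $K$ gives $\ell(\al,H)\leq\ell([M],H)$, and the lower bound by $\ell([pt],H)$ follows by dualizing with (c).

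Property (g) is an action-level calculation: when $\varphi_H(O_M)=\varphi_K(O_M)$, the primitive of $\varphi_H^*\theta-\varphi_K^*\theta$ on $O_M$ is constant (since $O_M$ is exact and the difference is closed on it), producing a uniform shift $C$ of the action functionals independent of the chosen critical point, and hence of every spectral threshold. The main obstacle I anticipate is (h). For $H_f$ sufficiently $C^2$-small on $T^*_R M$, I would show that every element of $\crit(\ah_f)$ is a constant path at a point of $\crit(f)$, with $\ah_f=f$; next verify by a maximum-principle argument built on the plurisubharmonicity of $|p|^2$ on $T^*M$ that no Floer cylinder with asymptotes in $O_M$ escapes $T^*_R M$, so that the Floer differential reduces to the Morse differential of $f$ for a generic metric; and finally check that the resulting filtration-preserving isomorphism $CF_*(L,H_f)\cong CM_*(f)$ intertwines $i_*^a$ with the sublevel inclusion $H_*(\{f<a\})\to H_*(M)$, so that $\ell(\al,H_f)$ coincides with $c_{LS}(\al,f)$ as defined before Proposition~\ref{pp:minmax}. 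The confinement of Floer cylinders inside the ball bundle is the technical heart of this last step and the crux of the whole proposition.
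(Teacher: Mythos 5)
You should first be aware that the paper itself contains no proof of Proposition~\ref{pp:lsi}: it is presented as a summary of known properties of Oh's Lagrangian spectral invariants, with the arguments delegated to the cited references (Oh's action-functional papers, Monzner--Vichery--Zapolsky, and Oh's generating-function preprint). So the relevant comparison is with those standard proofs, and your sketch does follow essentially that route: spectrality of the threshold from closedness and nowhere-density of $\spec(L,H)$, Hofer-type continuation estimates for (b) and (e), filtered duality for (c), and the filtered pair-of-pants product (compatible with the intersection product under the canonical isomorphism $HF_*(L,H)\cong H_*(M)$) for (f) and (d). At the level of an outline, items (a)--(f) are sound.

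Two points, however, fall short of the statement as written. First, (h) is asserted for an \emph{arbitrary} smooth $f$, while your argument (constant chords at $\crit(f)$, confinement by a maximum principle, Floer differential equal to the Morse differential, filtration comparison) is only valid for $C^2$-small $f$; for large $f$ the complex is still generated by $\crit(f)$ with actions the critical values, but the chain-level identification with the Morse complex is not automatic. The standard completion, which is missing from your sketch, is a rescaling argument: $s\mapsto\ell(\al,sH_f)$ is continuous by (b), lies in $\spec(L,sH_f)=\{s\,f(q):q\in\crit(f)\}$, and equals $s\,c_{LS}(\al,f)$ for small $s$ by your small-case identification; since the set of critical values of $f$ is closed and of measure zero (hence totally disconnected), $\ell(\al,sH_f)/s$ is constant in $s\in(0,1]$, giving the general case (alternatively, Oh's comparison with generating-function invariants). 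Note that the paper only ever invokes (h) for a $C^2$-small $f$ inside Lemma~\ref{lm:strict}, so your version would suffice for the main theorem, but not for Proposition~\ref{pp:lsi} as stated. Second, in (g) your observation that corresponding critical values of $\cA_H$ and $\cA_K$ differ by a uniform constant (uniqueness up to constant of a primitive of $\theta$ on the exact Lagrangian $\varphi_H(O_M)$) only matches the two action spectra up to a shift; by itself this does not identify the spectral numbers. One still needs either an identification of the filtered Floer complexes or a deformation argument combining (b) with spectrality and the nowhere-density of the spectrum --- exactly the mechanism the paper deploys inside the proof of Lemma~\ref{lm:strict} --- to promote the spectrum-level statement to $\ell(\al,H)=\ell(\al,K)+C$.
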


\section{The proof of the main theorem}\label{sec:3}

Our main theorem follows immediately from the following lemma.

\begin{lem}\label{lm:strict}
	Let $H\in \cH_{ac}$ and $\alpha,\beta\in H_*(M)\setminus\{0\}$ with $\deg(\al)<n$. If the intersections of $O_M$ and $\varphi_H(O_M)$ are isolated, then
	$$\ell(\al\cap\beta,H)<\ell(\beta,H).$$
	
\end{lem}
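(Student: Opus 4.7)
The plan is to argue by contradiction. Suppose $\ell(\alpha\cap\beta,H)=\ell(\beta,H)=:c$. Since $\varphi_H$ is compactly supported and $O_M\cap\varphi_H(O_M)$ is isolated, this intersection is finite, so by (a) the spectrum $\spec(L,H)$ is a finite -- hence discrete -- subset of $\R$; in particular $c$ is an isolated spectral value. The aim is to produce a small Morse-type perturbation of $H$ which pushes $\ell(\alpha\cap\beta,\cdot)$ strictly below $\ell(\beta,\cdot)$, contradicting the universal non-strict inequality $\ell(\alpha\cap\beta,G)\leq\ell(\beta,G)$, itself an immediate consequence of (f) with $H=0$ together with the normalization $\ell(\alpha,0)=c_{LS}(\alpha,0)=0$ coming from (h).

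First I would select a Morse function $g\colon M\to\R$ with $\max g=0$ attained at a single point $p_{\max}$ and with $c_{LS}(\alpha,g)=-\eta$ for some $\eta>0$. Such a $g$ exists because $\deg(\alpha)<n$: the sublevel $\{g<-\delta\}$ is homotopy equivalent to $M$ punctured near $p_{\max}$, and the inclusion $M\setminus\{p_{\max}\}\hookrightarrow M$ is surjective on $H_k$ for every $k<n$, so $\alpha$ lies in the image of $i^{-\delta}_*$ for $\delta$ small enough.

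Next, applying the product inequality (f) to the pair $(\alpha,\epsilon H_g)$ and $(\beta,H)$, and using (h) to evaluate the spectral invariants of $\epsilon H_g$, I would obtain, for every $\epsilon>0$,
\[
\ell(\alpha\cap\beta,\,\epsilon H_g\sharp H)\;\leq\;\ell(\alpha,\epsilon H_g)+\ell(\beta,H)\;=\;\epsilon\,c_{LS}(\alpha,g)+c\;=\;c-\epsilon\eta.
\]

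The main obstacle will then be to produce a matching lower bound $\ell(\beta,\epsilon H_g\sharp H)>c-\epsilon\eta$ for some small $\epsilon>0$. The Lipschitz continuity (b) alone yields only $\ell(\beta,\epsilon H_g\sharp H)\geq c-\epsilon\|g\|_{C^0}$, and since $\|g\|_{C^0}\geq\eta$ in general, this is not enough by itself. The improvement must exploit the discreteness of $\spec(L,H)$: for small $\epsilon$ the perturbed spectrum clusters tightly around the original spectral values, so $\ell(\beta,\cdot)$, being continuous in $H$ by (b) and confined to the spectrum by (a), is pinned to the cluster at $c$ with defect $o(\epsilon)$. An equivalent, cleaner implementation is to arrange $g$ to vanish on a neighborhood of the $M$-projection of a critical trajectory realizing $\ell(\beta,H)=c$, so that the corresponding perturbed critical trajectory has action exactly $c$. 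Either way, combining the lower bound with the displayed upper bound yields $\ell(\alpha\cap\beta,\epsilon H_g\sharp H)<\ell(\beta,\epsilon H_g\sharp H)$, contradicting the universal non-strict inequality and completing the proof.
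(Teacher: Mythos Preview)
Your overall logical scaffold does not close. You assume $\ell(\alpha\cap\beta,H)=\ell(\beta,H)=c$ and then work to show, for a perturbation $G=\epsilon H_g\sharp H$, that $\ell(\alpha\cap\beta,G)<\ell(\beta,G)$. But this does \emph{not} contradict the ``universal non-strict inequality'' $\ell(\alpha\cap\beta,G)\le\ell(\beta,G)$; strict inequality is a special case of it, not a violation. Nor can a strict inequality for $G$ be transported back to $H$: in the limit $\epsilon\to 0$ you recover only the non-strict inequality you already knew. So even granting every estimate you propose, nothing is proved about $H$.

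Separately, the ``main obstacle'' is not resolved by either of your suggestions. The claim that $\ell(\beta,\epsilon H_g\sharp H)$ is pinned to $c$ with defect $o(\epsilon)$ is unjustified: an $O(\epsilon)$ change of Hamiltonian moves each action value by $O(\epsilon)$, and your competing bound $c-\epsilon\eta$ is of the same order, so discreteness of $\spec(L,H)$ alone cannot separate them. The alternative --- making $g$ vanish near the projection of a single chord realizing $c$ --- is inconsistent with your earlier requirement that $g$ be Morse with a unique maximum, and in any case only guarantees $c\in\spec(L,G)$, not that $\ell(\beta,G)$ selects that particular value.

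The paper's proof repairs both problems with one move and dispenses with the contradiction framing entirely. Take $f\le 0$ vanishing on a neighborhood $U$ of the \emph{entire} finite set $O_M\cap\varphi_H(O_M)$ (so $\overline U$ is a disjoint union of closed balls and $H_{>0}(\overline U)=0$). Because $f\equiv 0$ near every intersection point, for small $\varepsilon$ the Lagrangians $\varphi_{\varepsilon H_f}\varphi_H(O_M)$ and $\varphi_H(O_M)$ have \emph{identical} intersections with $O_M$ and identical action values; hence $\spec(L,s\varepsilon H_f\sharp H)$ is independent of $s\in[0,1]$, and by continuity plus spectrality $\ell(\alpha\cap\beta,H)=\ell(\alpha\cap\beta,\varepsilon H_f\sharp H)$ exactly. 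On the other hand, $c_{LS}(\alpha,f)<0$ for $\deg\alpha<n$ by Proposition~\ref{pp:minmax}(5), since equality would force the zero level set $\overline U$ to be homologically non-trivial. Now the triangle inequality gives
\[
\ell(\alpha\cap\beta,H)=\ell(\alpha\cap\beta,\varepsilon H_f\sharp H)\le \ell(\alpha,\varepsilon H_f)+\ell(\beta,H)=\varepsilon\,c_{LS}(\alpha,f)+\ell(\beta,H)<\ell(\beta,H),
\]
which is the desired strict inequality for $H$ itself. Your ingredients (the product inequality and property~(h)) are the right ones; what is missing is the choice of $f$ vanishing on \emph{all} intersection points, which simultaneously freezes the spectrum and, via the Ljusternik--Schnirelman property~(5), forces $c_{LS}(\alpha,f)<0$.
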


\begin{proof}
	Since the intersections of $O_M$ and $\varphi_H(O_M)$ are isolated,  we can pick a small open neighborhood $U$ of $O_M\cap \varphi_H(O_M)$ in $M$ so that $H_k(\overbar{U})=0$ for all $k>0$. Let $f:M\to\R$ be a $C^2$-small function such that $f=0$ on $\overbar{U}$ and $f<0$ on $M\setminus \overbar{U}$, and let $H_f$ be the lift of $f$ to the cotangent bundle $T^*M$ as in Proposition~\ref{pp:lsi}(h). We claim that for any $\al\in H_{<n}(M)$, it holds that
	\begin{equation}\label{e:stricineq}
	\ell(\al,H_f)<0.
	\end{equation}

	For this end, we first prove that  $\cls(\al,f)<0$ for all $\al\in H_{<n}(M)$. In fact, if there exists a homology class $\al_l\in H_{<n}(M)$ such that $\cls(\al_{l},f)=0$, then we have $\cls(\al_l\cap [M],f)=\cls(\al_{l},f)=0$. It follows from Proposition~\ref{pp:minmax}(3) that $\cls([M],f)=\max_Mf=0$.   So we have $\cls(\al_l\cap [M],f)=\cls([M],f)$ with $\al_l\in H_{<n}(M)\setminus\{0\}$. Then by Proposition~\ref{pp:minmax}(5) the zero level set $\overbar{U}$ of $f$ is homologically non-trivial -- a contradiction. Therefore, for any
	$\al\in H_{<n}(M)$, we have $\cls(\al,f)<0$. This, together with Proposition~\ref{pp:lsi}(h), yields $\ell(\al,H_f)<0$.
	
	Next we  show that for sufficiently small $\varepsilon>0$
	\begin{equation}\label{e:eq}
	\ell(\al\cap\beta,H)=\ell(\al\cap\beta,  \varepsilon H_f\sharp H).
	\end{equation}
	Observe that $\varphi^t_{H_f}(q,p)=(q,p+t\partial_q f(a))\in T^*M$ for $t\in[0,1]$ and $(q,p)\in T^*_RM$. Set $L^H_R=\varphi_H^1(O_M)\cap T^*_RM$. Then we have
	$$\varphi_{\varepsilon H_f}(L^H_R)=\{(q,p+\varepsilon df(q))|(q,p)\in L^H_R\}.$$
	Since $L_R^H\cap\pi^{-1}(O_M\setminus U)$ is compact and has no intersections with $O_M$, we deduce that for small enough $\varepsilon>0$, $\varphi_{\varepsilon H_f}(L^H_R)\cap \pi^{-1}(O_M\setminus U)$ has no intersections with $O_M$ as well. For $(q,p)\in T^*M$ with $R\leq\|(q,p)\|\leq R+1$ we have
	$$d_g\big(\varphi_{\varepsilon H_f}(q,p),(q,p)\big)\leq\bigg\|\int^1_0\frac{d}{dt}
	\varphi_{\varepsilon H_f}^t(q,p)dt\bigg\|\leq \varepsilon\sup\limits_{R\leq\|(q,p) \|\leq R+1}\|X_{H_f}\|,$$
	where $d_g$ is the distance function induced by some Riemannian metric $g$ on $M$. Therefore, for  sufficiently small $\varepsilon>0$, $\varphi_{\varepsilon H_f}(T^*_{R+1}M\setminus T^*_RM)$ does not intersect $O_M$. Note that the Hamiltonian diffeomorphism $\varphi_{H_f}^\varepsilon$ is supported in $T^*_{R+1}M$, we conclude that  $\varphi_{\varepsilon {H_f}}\varphi_H(O_M)\cap\pi^{-1}(O_M\setminus U)$ does not intersect $O_M$ provided that $\varepsilon>0$ is sufficiently small. On the other hand, we have that
	$\varphi_{\varepsilon {H_f}}\varphi_H(O_M)\cap\pi^{-1}(U)=\varphi_H(O_M)\cap\pi^{-1}(U)$ because $f=0$ on $U$. So if $\varepsilon>0$ is sufficiently small then the Lagrangians $\varphi_{\varepsilon {H_f}}\varphi_H(O_M)$ and $\varphi_H(O_M)$ have the same intersections with $O_M$. A direct calculation shows that for every such intersection point, the two action values corresponding to $\varepsilon {H_f}\sharp H$ and $H$ are the same. Indeed, there is a one-to-one correspondence between the set $\varphi_H(O_M)\cap O_M$ and the set $\cP(H):=\{x\in\cP|\dot{x}=X_H(x(t))\}$ of Hamiltonian chords by sending $q\in \varphi_H(O_M)\cap O_M$ to $x=\varphi_H^t(\varphi^{-1}_H(q))$. So we get a bijective map defined by
	$$\Upsilon:\crit(\mathcal{A}_H)\longrightarrow\crit(\cA_{\varepsilon {H_f}\sharp H}),\quad x(t)\longmapsto\varphi_{\varepsilon {H_f}}^t(x(t)).$$
	
	Notice that the Hamiltonian flow has the following property
	\[(\fw)^*\theta-\theta=dF_t,\] where the function $F:[0,1]\times T^*M\to\mathbb{R}$ is given by $F_t=\int^t_0(\theta(X_{\fc})-\fc)\circ\varphi_{\fc}^sds$, see for
	instance~\cite[Proposition~9.3.1]{MS}.
	As a consequence, for any $x\in \mathcal{P}(H)$ we have
	\[
	\frac{d}{dt}F_t(x(t))=dF_t(\dot{x}(t))+\big(\theta(X_{\fc})-\fc\big)\circ\varphi_{\fc}^t\big(x(t)\big).
	\]
	which implies
	\[
	(\fw)^*\theta\big(\dot{x}(t)\big)=\theta\big(\dot{x}(t)\big)
	+\frac{d}{dt}F_t(x(t))-\big(\theta(X_{\fc})-\fc\big)\circ\varphi_{\fc}^t\big(x(t)\big).
	\]
	Then we compute
	\begin{eqnarray}
	\cA_{\fc\sharp H}\big(\Upsilon(x(t))\big)&=&\int^1_0\fc(\varphi_{\fc}^t(x(t)))dt+\int^1_0H_t\circ(\varphi_{\fc}^t)^{-1}(\varphi_{\fc}^t(x(t)))dt\notag\\
	&&-\int^1_0\theta\bigg(\frac{d}{dt}\fw\big(x(t)\big)\bigg)dt\notag\\
	&=&\int^1_0\fc(\varphi_{\fc}^t(x(t)))dt+\int^1_0H_t\big(x(t)\big)dt-\int^1_0(\fw)^*\theta\big(\dot{x}(t)\big)dt\notag\\
	&&-\int^1_0\theta\big(X_{\fc}(\fw(x(t)))\big)dt\notag\\
	&=&\mathcal{A}_H\big(x(t)\big)+\int^1_0\fc(\varphi_{\fc}^t(x(t)))-
	\frac{d}{dt}F_t\big(x(t)\big)dt\notag\\
	 &&+\int^1_0\big(\theta(X_{\fc})-\fc\big)\circ\varphi_{\fc}^t\big(x(t)\big)dt-\theta\big(X_{\fc}(\fw(x(t)))\big)dt\notag\\&=&\mathcal{A}_H\big(x(t)\big)+F_1(x(1))-F_0(x(0))\notag\\
	&=&\mathcal{A}_H\big(x(t)\big),
	\end{eqnarray}
	where in the last equality we have used the fact that the value of an autonomous Hamiltonian $H_f$ is constant along its Hamiltonian flow, and $f=0$ on $U$ which contains $x(1)$. Therefore,  the action spectra $\spec(L,\varepsilon H_f\sharp H)$ and $\spec(L,H)$ are the same. Now fix a sufficiently small $\varepsilon>0$ and consider the family of Lagrangians $\varphi_{s\varepsilon {H_f}}\varphi_H(O_L)$ with $s\in[0,1]$. As before, the action spectra $\spec(L,s\varepsilon H_f\sharp H)$, $s\in[0,1]$ are all the same.  Since the action spectrum is a closed nowhere dense subset of $\R$, it follows from Proposition~\ref{pp:lsi}(b)  that
	$\ell(\al\cap\beta, s\varepsilon H_f\sharp H)$ do not depend on $s$. So we have $\ell(\al\cap\beta,H)=\ell(\al\cap\beta,\varepsilon H_f\sharp H)$.
	
	Combining (\ref{e:stricineq}) and (\ref{e:eq}),  it follows from Proposition~\ref{pp:lsi}(f) that
	$$\ell(\al\cap\beta,H)=\ell(\al\cap\beta,\varepsilon H_f\sharp H)\leq \ell(\al, \varepsilon H_f)+\ell(\beta,H)<\ell(\beta, H).$$
	This completes the proof of the lemma.
\end{proof}

\begin{proof}[The proof of Theorem~\ref{e:mthm}] 	Without loss of generality we may assume that
	the intersections of $O_M$ and $\varphi_H(O_M)$ are isolated, otherwise, nothing needs to be proved.
	Set $cl(M)=k+1$. By definition there exist $u_i\in H_{<n}(M)$, $i=1\ldots,k$ such that $u_1\cap\cdots\cap u_k=[pt]$. We put
	$$[M]=\al_0,\al_1,\ldots,\al_k\in H_*(M), \quad \al_i=u_{k-i+1}\cap\al_{i-1}.$$
	For any $\varphi\in \ham_c(M,\omega)$, there exists a Hamiltonian $H\in\cH_{ac}$ such that $\varphi=\varphi_H^1$.
	It follows from  Lemma~\ref{lm:strict} and Proposition~\ref{pp:lsi}(a) that there exist $k+1$ elements $x_i\in\crit(\ah)$  such that
	$$\ell(\al_k,H)=\ah(x_k)<\ell(\al_{k-1},H)=\ah(x_{k-1})<\cdots<\ell(\al_0,H)=\ah(x_0).$$
	Hence, all $x_i$, $i=0,\ldots, k$ are different. The one-to-one correspondence  between the intersection points of $O_M$ and $\varphi_H^1(O_M)$ and the critical points of $\ah$ concludes the desired result.

\end{proof}

 \end{document}